
\documentclass[12pt,reqno]{amsart}
\usepackage{amsmath,amsfonts,epsfig,amssymb,graphics}


\newtheorem{thm}{Theorem}[section]
\newtheorem{cor}{Corollary}[section]
\newtheorem{prop}{Proposition}[section]
\newtheorem{lemma}{Lemma}[section]
\newtheorem{defn}{Definition}[section]
\newtheorem{rem}{Remark}[section]

\newcommand{\firstfunc}{\lambda_1(M) \text{Vol}(M)}
\newcommand{\kthfunc}{\lambda_k(M) \text{Vol}(M)}
\newcommand{\kthfunchigh}{\lambda_k(M) \text{Vol}(M)^{2/m}}
\newcommand{\firstfunchigh}{\lambda_1(M) \text{Vol}(M)^{2/m}}

\newcommand{\R}{\mathbb R}
\newcommand{\bbS}{\mathbb S}

\newcommand{\indice}{intersection index}

\begin{document}

\title[Spectrum of submanifolds]{Bounding the eigenvalues of the Laplace-Beltrami operator on compact submanifolds}
\author{Bruno Colbois}
\address{Universit\'e de Neuch\^atel, Institut de Math\'ematiques, Rue Emile-Argand 11, Case postale 158, 2009 Neuch\^atel
Switzerland}
\email{bruno.colbois@unine.ch}
\author{Emily B. Dryden}
\address{Department of Mathematics, Bucknell University, Lewisburg, PA 17837, USA}
\email{ed012@bucknell.edu}
\author{Ahmad El Soufi}
\address{Laboratoire de Math\'{e}matiques et Physique Th\'{e}orique, UMR-CNRS 6083, Universit\'{e} Fran\c{c}ois Rabelais de Tours, Parc de Grandmont, 37200 Tours, France}
\email{elsoufi@univ-tours.fr}

\thanks{The third author has benefitted from the support of the ANR (Agence Nationale de la Recherche) through FOG project ANR-07-BLAN-0251-01.}

\date{}
\begin{abstract} We give upper bounds for the eigenvalues of the La-place-Beltrami operator of a compact $m$-dimensional submanifold $M$
 of $\R^{m+p}$. Besides the dimension and the volume of the submanifold and the order of the eigenvalue, these bounds depend on either the maximal number of intersection points of $M$ with a $p$-plane in a generic position (transverse to $M$), or an invariant which measures the concentration
of the volume of $M$ in $\R^{m+p}$. These bounds are asymptotically optimal in the sense of the Weyl law. On the other hand, we show that even for hypersurfaces (i.e., when $p=1$), the first positive eigenvalue cannot be controlled only in terms of the volume, the dimension and (for $m\ge 3$) the differential structure. 

\end{abstract}


\subjclass[2000]{
58J50, 58E11, 35P15
}
\keywords{Laplacian, eigenvalue, upper bound, submanifold}


\maketitle

\section{Introduction}

Let $M$ be a compact, connected submanifold without boundary of dimension $m \geq 2$ immersed in a Euclidean space $\R^{m+p}$ with $p\ge1$; that is, $M$ is the image of a compact smooth manifold $\bar M$ of dimension $m$ by an immersion $X:\bar M\to\R^{m+p}$ of class $C^2$. 
We denote by  $g$  the Riemannian metric naturally induced on $\bar M$ (that is, the first fundamental form of the submanifold $M$) and by $\Delta$ the corresponding Laplace-Beltrami operator whose spectrum consists in an unbounded sequence of eigenvalues
$$
\text{Spec}(\Delta) = \{ 0 = \lambda_0(M) < \lambda_1(M) \leq \lambda_2(M) \leq \cdots \leq \lambda_k(M) \leq \cdots \}.
$$
Consider the $k$-th eigenvalue as a functional
$$
M \mapsto \lambda_k (M)$$
on the space of all immersed $m$-dimensional submanifolds of fixed volume of $\R^{m+p}$; alternatively, consider the normalized dilation-invariant functional 
$$
M \mapsto  \lambda_k (M) \text{Vol}(M)^{2/m}
$$
on the space of all immersed $m$-dimensional submanifolds of $\R^{m+p}$.  Of course, these two functionals have the same variational properties.

Hersch \cite{H} was the first to obtain a result on these functionals.  Indeed, let $\bbS^2$ be the standard $2$-sphere naturally embedded in $\R^3$. Hersch proved that if $M$ is any 2-dimensional immersed surface of genus zero of $\R^{2+p}$ (that is, $M=X(\bbS^2)$, where $X:\bbS^2\to\R^{2+p}$ is an immersion), then  
$$
\firstfunc \leq \lambda_1(\bbS^2)\text{Vol}(\bbS^2)=8 \pi.
$$
Moreover, the equality holds if and only if $M$ has constant Gaussian curvature.

One decade later, Yang and  Yau \cite{YY}  proved  that if $M$ is  
an orientable immersed surface of genus $\gamma$ (that is, $M=X(\bar M)$ where $\bar M$ is an orientable compact surface of genus $\gamma$ and $X:\bar M\to\R^{2+p}$ is an immersion), then  $\firstfunc \leq 8 \pi (\gamma +1)$.  This bound has been improved by Ilias and the third author   \cite{EI1} as follows:
$$
\firstfunc \leq 8 \pi \left\lfloor \frac{\gamma + 3}{2}\right\rfloor ,
$$
where $\lfloor \cdot \rfloor$ denotes the floor function.  

A similar upper bound was obtained in the nonorientable case by Li and Yau \cite{LY}. However, these upper bounds are not optimal in general and the exact value of the supremum of $\firstfunc $ among surfaces of fixed topology is known only in the following cases: immersed spheres (Hersch \cite{H}), tori  (Nadirashvili \cite{N}), projective planes (Li and Yau \cite{LY}) and  Klein bottles (\cite{JNP} and \cite{EGJ}).  A conjecture concerning orientable surfaces of genus 2 is stated in \cite{JLNNP}.

The extension of the result of Yang and Yau to higher order eigenvalues was obtained by Korevaar in \cite{K}:  
there exists a universal constant $C>0$ such that for any integer $k \geq 1$ and any compact orientable surface $M$ of genus $\gamma$, we have
$$
\kthfunc \leq C(\gamma + 1) k.
$$
Note that the estimates given above for the first nonzero eigenvalue $\lambda_1$ are based on ``barycentric type methods,'' i.e., the use of coordinate functions as test functions after a suitable transformation that puts the barycenter at the origin; for a typical example of this classical method, see \cite{EI1} or \cite{H}.  These barycentric methods do not apply to higher order eigenvalues, and Korevaar's proof required new techniques.

In dimensions three and higher, the situation differs significantly from the 2-dimensional case.  
Indeed, it follows from the Nash embedding theorem and the result of Dodziuk and  the first author  \cite{CD} that in dimension $m \geq 3$, we have 
$$
\sup_M \firstfunchigh  = \infty,
$$
where the supremum is taken over all compact submanifolds $M$ with fixed smooth structure (that is, $M=X(\bar M)$, where $\bar M$ is a fixed compact smooth manifold of dimension $m\ge3$ and $X:M\to\R^{m+p}$ is a smooth immersion from $M$ into $\R^{m+p}$ for some $p\ge 1$).  

To study extremal properties of the spectrum, it is therefore necessary to 
impose additional constraints, either of an intrinsic or an extrinsic nature.  For example, we can assume that the induced metric $g$ preserves a conformal class of metrics \cite{CE, EI, K}, 
a symplectic or a K\"{a}hler structure \cite{P, BLY}, the action of a Lie group \cite{AF,CDE, E1}, etc.  Regarding results with constraints of extrinsic type, a well-known example is given by Reilly's inequality \cite{R,EI2}: 
 $$ \lambda_1(M)\le\frac m {\text{Vol}(M)} \|H(M)\|_2^2,$$
 where $\|H(M)\|_2$ is the $L^2$-norm of the mean curvature vector field of $M$. More generally, it follows from results of Harrell, Ilias and the third author \cite{EHI} and the recursion formula of Cheng and Yang \cite[Corollary 2.1]{CY} that for any positive integer $k$,
  $$ \lambda_k(M)\le R(m) \|H(M)\|_\infty ^2 \ k^{2/m},$$
where $\|H(M)\|_\infty$ is the $L^\infty$-norm of $H(M)$ and $R(m)$ is a constant depending only on $m$.
 
In this paper, we will focus on other extrinsic constraints. The
first one is related to the following invariant: For a compact immersed submanifold $M$ of dimension $m$ in $\R^{m+p}$, almost all the $p$-planes $\Pi$ in $\R^{m+p}$  are transverse to $M$
so that the intersection $\Pi \cap M$ consists of a finite number of points. We define
the \emph{\indice \ of $M$}  as the supremum
$$i(M)=\sup_\Pi \#  M\cap \Pi ,$$ where $\Pi$ runs over the set of all $p$-planes which are transverse to $M$ in $\R^{m+p}$; if $M$ is not embedded, we count multiple points of $M$ according to their multiplicity. For instance, the \indice \ of a hypersurface $M$ is the ``maximal'' number of collinear points in $M$. Note that the invariant $i(M)$ is related to the notion of \emph{width} introduced by Hass, Rubinstein and Thompson \cite[Definition 1]{HRT}. 

We begin by obtaining an estimate involving the \indice \ for the lowest positive eigenvalue $\lambda_1$.  Note that $B^m$ denotes the Euclidean ball of radius $1$ in $\R^m$ and
$\mathbb{S}^m$ denotes the unit sphere in $\R^{m+1}$.

\begin{thm}\label{betterestimate}
For every compact $m$-dimensional immersed submanifold $M$ of a Euclidean space $\R^{m+p}$ we have
\[
\firstfunchigh \leq   A(m)  \left(\frac{i(M)}{2}\right)^{1+\frac{2}{m}} \lambda_1(\mathbb{S}^m)\text{Vol}(\mathbb{S}^m)^{2/m} ,
\]
where $\lambda_1(\mathbb{S}^m)=m$ and $A(m)=\frac{m+2}2\frac{\text{Vol}(\mathbb{S}^m)}{\text{Vol}(\mathbb{S}^{m-1})}=\frac{m+2}2{\sqrt\pi}\frac{\Gamma(\frac{m} 2)}{\Gamma(\frac {m+1} 2)}$.  
\end{thm}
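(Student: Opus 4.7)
\medskip\noindent\textbf{Proof proposal.}
The plan is to apply Hersch's variational method with test functions built by composing the orthogonal projection of $M$ onto an $m$-plane with the stereographic projection onto $\bbS^m$ and a M\"obius transformation. Concretely, fix a $p$-plane $\Pi_0 \subset \R^{m+p}$ realizing the intersection index, so that generic $p$-planes parallel to $\Pi_0$ meet $M$ in at most $i(M)$ points. Set $V = \Pi_0^\perp \cong \R^m$ and let $\pi : \R^{m+p} \to V$ be the orthogonal projection, so that $\#(\pi^{-1}(y) \cap M) \le i(M)$ for almost every $y \in V$. Let $\sigma : V \to \bbS^m$ be inverse stereographic projection, conformal with factor $\mu(y) = 2/(1+|y|^2)$ and satisfying $\int_V \mu^m \, dy = \text{Vol}(\bbS^m)$. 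For $p$ in the open unit ball $B^{m+1} \subset \R^{m+1}$, parametrize the standard $(m+1)$-dimensional family of M\"obius transformations of $\bbS^m$ by $\gamma_p$ with $\gamma_0 = \mathrm{id}$, and set $\phi_p = \gamma_p \circ \sigma \circ \pi : M \to \bbS^m$.

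The classical Brouwer-degree argument (Hersch's trick) produces $p^\ast \in B^{m+1}$ such that $\int_M \phi_{p^\ast} \, d\text{vol}_M = 0$ in $\R^{m+1}$, making each coordinate $X_i \circ \phi_{p^\ast}$ a valid test function for $\lambda_1(M)$. Summing the resulting Rayleigh inequalities and using $\sum_i X_i^2 \equiv 1$ on $\bbS^m$ together with $\sum_i |\nabla^M(X_i \circ \phi_{p^\ast})|^2 = |d\phi_{p^\ast}|^2_{HS}$ yields $\lambda_1(M)\,\text{Vol}(M) \le \int_M |d\phi_{p^\ast}|^2_{HS}\,d\text{vol}_M$. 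Writing $\phi_{p^\ast} = h \circ \pi$ with $h = \gamma_{p^\ast} \circ \sigma : V \to \bbS^m$ conformal of factor $\mu_h$, conformality gives $|d\phi_{p^\ast}|^2_{HS} = (\mu_h \circ \pi)^2\,|d\pi|^2_{HS} \le m\,(\mu_h \circ \pi)^2$, since each singular value of $d\pi$ is at most $1$. Applying H\"older's inequality with exponents $m/2$ and $m/(m-2)$ reduces the task to bounding $\int_M (\mu_h \circ \pi)^m \, d\text{vol}_M$; since $\mu_h^m$ equals the conformal Jacobian of $h$, the coarea formula gives $\int_M (\mu_h \circ \pi)^m \, J\pi \, d\text{vol}_M = \int_V \mu_h(y)^m\,\#\pi^{-1}(y)\,dy \le i(M)\,\text{Vol}(\bbS^m)$.

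The main obstacle will be passing from this $J\pi$-weighted bound to the unweighted integral required above, since $J\pi$ degenerates wherever $T_x M$ becomes tangent to $\Pi_0$. I would close this gap either by averaging the entire construction over an auxiliary family of M\"obius transformations (so that the points where $J\pi^{-1}$ blows up are integrated out), or by refining the energy estimate with the AM--GM inequality applied to the singular values of $d\pi$, trading the factor $|d\pi|^2_{HS}$ for a suitable power of $J\pi$ to enable a direct coarea estimate. The combinatorial bookkeeping of this step should produce the power $(i(M)/2)^{1+2/m}$, with the factor of $\tfrac12$ reflecting the generic pairing of intersection points of a $p$-plane with $M$, together with the constant $A(m) = \tfrac{m+2}{2}\,\text{Vol}(\bbS^m)/\text{Vol}(\bbS^{m-1})$, which arises from a standard integral-geometric identity relating averaged powers of the stereographic conformal factor $\mu$ on $V$ to volumes of low-dimensional spheres. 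Combined with the Rayleigh estimate above, these bounds assemble to give the claimed inequality.
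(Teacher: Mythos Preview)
Your approach via stereographic projection and conformal volume is different from the paper's, and the gap you yourself flag is real and not repaired by either suggested fix. The coarea formula yields only $\int_M (\mu_h\circ\pi)^m\,J\pi\,d\mathrm{vol}_M \le i(M)\,\mathrm{Vol}(\bbS^m)$, whereas after H\"older you need the unweighted $\int_M (\mu_h\circ\pi)^m\,d\mathrm{vol}_M$; since $J\pi\le 1$ the weighted integral is the \emph{smaller} one, so the coarea bound says nothing about the quantity you actually need. Averaging over M\"obius transformations of $\bbS^m$ cannot help, as these act \emph{after} $\pi$ and do not touch the degeneracy of $J\pi$. The AM--GM idea also fails: with $\sigma_1,\dots,\sigma_m$ the singular values of $d\pi$, one has $|d\pi|^2_{HS}=\sum\sigma_i^2\ge m\,J\pi^{2/m}$, which is the wrong direction, and in fact no upper bound of the form $|d\pi|^2_{HS}\le C\,J\pi^{\alpha}$ can hold (take $\sigma_1=\cdots=\sigma_{m-1}=1$, $\sigma_m=0$). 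When a large-volume portion of $M$ is nearly tangent to $\Pi_0$, there is simply no control of $\int_M(\mu_h\circ\pi)^m$ in terms of $i(M)$.

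The paper bypasses this entirely with the \emph{linear} barycenter method: translate so the Euclidean center of mass of $M$ is at the origin and use the ambient coordinates $x_i$ themselves as test functions, giving $\lambda_1(M)\int_M|x|^2\le m\,\mathrm{Vol}(M)$. The work is then to \emph{lower}-bound the moment of inertia $\int_M|x|^2$. An integral-geometric average over the Grassmannian produces an $m$-plane $H$ with $\mathrm{Vol}(\pi_H(M))\ge \tfrac{2}{i(M)}\tfrac{\mathrm{Vol}(B^m)}{\mathrm{Vol}(\bbS^m)}\,\mathrm{Vol}(M)$; since $\pi_H$ is distance-decreasing and generically at least $2$-to-$1$ on a compact closed $M$ (this is the source of the factor $\tfrac12$), one gets $\int_M|x|^2\ge 2\int_{\pi_H(M)}|y|^2\,dy$, and the elementary rearrangement $\int_\Omega|y|^2\,dy\ge\int_{\Omega^*}|y|^2\,dy$ with $\Omega^*$ the equal-volume ball finishes the computation. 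The constant $A(m)$ then emerges from $\int_{B^m}|x|^2\,dx=\tfrac{1}{m+2}\,\mathrm{Vol}(\bbS^{m-1})$, not from any stereographic identity.
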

\noindent The proof of this theorem again uses the barycenter method.

The result of Theorem \ref{betterestimate} extends to higher order eigenvalues but with a less explicit upper bound. 
\begin{thm}\label{mainth}
For every compact $m$-dimensional immersed submanifold $M$ of $\R^{m+p}$ and every positive integer $k$, we have
\[
\kthfunchigh \leq   c(m) i(M)^{2/m} k^{2/m},
\]
where $c(m)$ is a constant depending only on the dimension $m$ of $M$ which is given explicitly by \eqref{eqn:bigC} and \eqref{eqn:littlec}.
\end{thm}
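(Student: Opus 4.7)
The barycentric technique that proves Theorem~\ref{betterestimate} uses the coordinate functions (shifted to kill the barycenter) as test functions for $\lambda_1$, and so is intrinsically limited to the first eigenvalue. For $\lambda_k$ I would instead follow the now-standard test-function/capacity scheme of Korevaar \cite{K} and especially Grigor'yan--Netrusov--Yau: exhibit a family of $k$ disjoint subsets of $M$ on which one can plant hat-type test functions with controlled Rayleigh quotients, and then conclude by the min-max principle
\[
\lambda_k(M)\le\max_{1\le i\le k} \frac{\int_M|\nabla f_i|^2\, dv_g}{\int_M f_i^2\, dv_g}.
\]

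The extrinsic input that drives everything is an Ahlfors-type upper regularity for the push-forward of $dv_g$ to $\R^{m+p}$: for every Euclidean ball $B(x,r)\subset \R^{m+p}$,
\[
\text{Vol}_g\bigl(X^{-1}(B(x,r))\bigr)\le C_1(m)\, i(M)\, r^m.
\]
This estimate is the higher-eigenvalue analogue of the input used in Theorem~\ref{betterestimate} and would be the first step of the proof. I would obtain it via integral geometry: for a generic affine $p$-plane $\Pi$ meeting $B(x,r)$, the hypothesis bounds $\#(M\cap\Pi)\le i(M)$, and integrating over the kinematic measure on the $p$-planes hitting $B(x,r)$ (which has total mass a dimensional constant times $r^m$) together with the Cauchy--Crofton formula converts this pointwise bound into the volume bound displayed above.

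Equipped with such a doubling-type upper bound on the measure $X_{\ast}dv_g$ viewed in the ambient Euclidean metric, I would invoke the combinatorial decomposition theorem of Grigor'yan--Netrusov--Yau: there exist $k$ pairwise disjoint ``annular'' regions $\{B(x_i, r_i)\setminus B(x_i, r_i/2)\}_{i=1}^k$ (more precisely, $2k$ of them, of which one then keeps half after a pigeonholing argument) whose preimages $A_i=X^{-1}(B(x_i,r_i)\setminus B(x_i,r_i/2))$ satisfy
\[
\text{Vol}_g(A_i)\ \ge\ c_1(m)\,\frac{\text{Vol}(M)}{i(M)\,k},
\]
and whose ``doubles'' $\tilde A_i=X^{-1}(B(x_i,2r_i))$ are still pairwise disjoint and obey the Euclidean upper bound. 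The radii can be chosen to satisfy $r_i^m\ge c_2(m)\,\text{Vol}(M)/(i(M)\,k)$.

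Finally I would define, for each $i$, the Lipschitz test function
\[
f_i(y)\ =\ \bigl(\, \text{dist}(X(y),\partial B(x_i,2r_i))\,\bigr)_{+}\ \chi_{\tilde A_i}(y),
\]
or the standard piecewise-linear hat supported in $\tilde A_i$ and equal to a constant on $A_i$. Then $|\nabla_g f_i|\le 1$ pointwise since $X$ is an isometric immersion, so $\int_M|\nabla f_i|^2\le \text{Vol}_g(\tilde A_i)\le C_1(m)i(M)(2r_i)^m$, while $\int_M f_i^2\ge c_3 r_i^2\,\text{Vol}_g(A_i)$. Taking the Rayleigh quotient and using the lower bound on $\text{Vol}_g(A_i)$ and on $r_i^m$ gives
\[
\frac{\int_M|\nabla f_i|^2}{\int_M f_i^2}\ \le\ \frac{C(m)}{r_i^2}\ \cdot\ \frac{i(M)\, k}{\text{Vol}(M)}\cdot r_i^m \cdot r_i^{-m}
\ \le\ C(m)\Bigl(\tfrac{i(M)\, k}{\text{Vol}(M)}\Bigr)^{2/m},
\]
which, combined with the disjointness of the supports and the min-max principle, yields Theorem~\ref{mainth}. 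The principal technical obstacle is the second step: producing the $k$ disjoint annuli with the claimed volume lower bounds from only a Euclidean upper regularity for the volume. This is exactly where the GNY/Colbois-Maerten combinatorial covering machinery is needed, and choosing the explicit dimensional constant $c(m)$ entering \eqref{eqn:bigC}--\eqref{eqn:littlec} amounts to carefully tracking constants through that argument.
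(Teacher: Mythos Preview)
Your overall strategy matches the paper's exactly: first derive the Ahlfors-type upper bound $\text{Vol}(M\cap B(x,r))\le C(m)\,i(M)\,r^m$ (the paper obtains it by projecting onto a well-chosen $m$-plane, but Cauchy--Crofton would work equally well), then feed this into the Colbois--Maerten covering machinery to produce disjoint sets carrying hat-type test functions, pigeonhole among $2k+1$ sets to control the volumes of their $r$-neighborhoods, and conclude by min--max. In the paper this is packaged as Theorem~\ref{mainth} $\Leftarrow$ Theorem~\ref{concentration} $+$ Proposition~\ref{fundprop}.

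Two bookkeeping points in your final Rayleigh estimate need correction, however, since as written the displayed chain of inequalities does not actually yield the exponent $2/m$ on $i(M)$. First, the Colbois--Maerten (or GNY) volume lower bound is $\text{Vol}_g(A_i)\ge c(m)\,\text{Vol}(M)/k$, \emph{without} an $i(M)$ in the denominator; the intersection index enters only through the scale $r$, via the Ahlfors bound. Second, for the numerator $\int|\nabla f_i|^2$ you must use the pigeonholed estimate $\text{Vol}_g(\tilde A_i)\le \text{Vol}(M)/k$ (which you mention but then abandon), \emph{not} the Ahlfors bound $C\,i(M)\,r_i^m$: with only an upper Ahlfors regularity you have no upper bound on $r_i$, hence none on $r_i^{m-2}$, and your displayed inequality does not close. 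With these two fixes the Rayleigh quotient is $\le C(m)/r^2\le C(m)\bigl(i(M)\,k/\text{Vol}(M)\bigr)^{2/m}$, as required.
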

The exponent of $k$ in the estimate is asymptotically best possible as follows from the Weyl law.  The estimate itself is not sharp, because the constants are not optimal.

For a convex hypersurface we clearly have $i(M)=2$. Thus, 
\begin{cor}
If $M$ is a compact convex hypersurface of $\R^{m+1}$, then
\[
\firstfunchigh \leq  A(m) \ \lambda_1(\mathbb{S}^m)\text{Vol}(\mathbb{S}^m)^{2/m}
\]
and, for all $k\ge 2$,
\[
\kthfunchigh \leq  c(m) \ 2^{2/m}\ k^{2/m}.
\]
\end{cor}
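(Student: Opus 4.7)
The plan is elementary: I would first verify that the intersection index of a compact convex hypersurface in $\R^{m+1}$ is $i(M) = 2$, and then simply substitute this value into Theorems \ref{betterestimate} and \ref{mainth}. Because $p = 1$ in this setting, the relevant notion of a ``$p$-plane'' in $\R^{m+p}$ is just an affine line in $\R^{m+1}$.

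For the intersection-index computation, I would invoke the standard fact from convex geometry: a compact convex hypersurface $M$ bounds a convex body $K$ with $M = \partial K$, and any affine line $\ell$ transverse to $M$ meets $K$ either in the empty set or in a closed line segment. Consequently $\ell \cap M = \ell \cap \partial K$ consists of at most two points. This immediately gives $i(M) \leq 2$, with equality for generic lines crossing the interior of $K$.

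With $i(M) \leq 2$ in hand, I would plug this bound into Theorem \ref{betterestimate}. The factor $(i(M)/2)^{1+2/m}$ then reduces to at most $1$, which yields the claimed upper bound on $\firstfunchigh$. Substituting $i(M) \leq 2$ into Theorem \ref{mainth} gives $i(M)^{2/m} \leq 2^{2/m}$, which produces the second bound on $\kthfunchigh$.

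Since the entire argument reduces to an elementary observation in convex geometry followed by a direct substitution, I do not anticipate any real obstacle. The only step requiring justification is the standard remark that a line meets the boundary of a convex body in at most two points, which is classical. Accordingly, the corollary is really just the specialization of the two theorems to the case $i(M) = 2$.
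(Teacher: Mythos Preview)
Your proposal is correct and matches the paper's approach exactly: the paper simply remarks that a convex hypersurface clearly has $i(M)=2$ and then reads off the corollary from Theorems \ref{betterestimate} and \ref{mainth}. Your added justification that a transverse line meets the boundary of a convex body in at most two points is the obvious (and correct) reason behind this remark.
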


When $M$ is a real algebraic hypersurface defined by a polynomial equation of degree $N$, it is easy to show that $i(M)\le N$. This implies

\begin{cor} Let $P$ be a real polynomial in $m+1$ variables and of degree $N$ such that  $M=P^{-1}(0)\subset \R^{m+1}$ is a compact hypersurface. Then, for all $k\ge 1$, 
\[
\kthfunchigh \leq   c(m) N^{2/m}k^{2/m} .
\]
\end{cor}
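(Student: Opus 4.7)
The plan is to combine Theorem \ref{mainth} with an elementary algebraic geometry argument bounding the intersection index $i(M)$ by the degree $N$. Since Theorem \ref{mainth} already gives
\[
\kthfunchigh \leq c(m)\, i(M)^{2/m} k^{2/m},
\]
it suffices to verify that $i(M) \leq N$ whenever $M = P^{-1}(0)$ with $\deg P = N$.

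To establish this bound, I would take an arbitrary line $\ell \subset \R^{m+1}$ transverse to $M$ (recall that for a hypersurface $p=1$, so ``$p$-planes'' are affine lines). Parametrizing $\ell$ as $t \mapsto x_0 + t v$ with $v \neq 0$, I would consider the restriction $Q(t) := P(x_0 + tv)$, which is a polynomial in one real variable of degree at most $N$. The key observation is that $Q$ cannot be identically zero: if it were, the entire line $\ell$ would lie in $M$, contradicting both the compactness of $M$ and the transversality assumption. Hence $Q$ has at most $N$ real roots, and these roots are precisely in bijection with the points of $\ell \cap M$. Taking the supremum over all transverse lines gives $i(M) \leq N$.

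Substituting this inequality into the bound of Theorem \ref{mainth} yields the claimed estimate. The main (very mild) subtlety worth checking is that the argument handles the convention in the definition of $i(M)$ that counts intersections with multiplicity for non-embedded $M$; however, here $M$ is the zero set of a polynomial, hence automatically embedded as a subset of $\R^{m+1}$, so multiplicity plays no role beyond the standard polynomial root count. No other step presents a real obstacle.
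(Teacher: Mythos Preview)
Your proposal is correct and matches the paper's approach: the paper simply remarks that for an algebraic hypersurface of degree $N$ ``it is easy to show that $i(M)\le N$'' and then invokes Theorem~\ref{mainth}. Your restriction-to-a-line argument is precisely the elementary justification the authors have in mind.
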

\noindent A more general result, in arbitrary codimension, will be given in Corollary  \ref{coro1}.

These results tell us that to the extent that the \indice \ of $M$ is controlled, the spectrum of $M$ is also controlled. 
In fact, Theorem \ref{mainth} can be understood as a consequence of a more abstract result given in Theorem \ref{concentration} below.  It turns out that control of the intersection index $i(M)$ suffices to give control on the ``concentration of the volume'' of $M$ in $\R^{m+p}$ in the sense of the following definition. 

\begin{defn}\label{def} Let $m\ge 2$ and $p\ge1$ be two integers and let $L$ be a positive real number. We denote by  
$\mathcal M(m,p,L)$ the class of all $m$-dimensional compact immersed submanifolds of $\R^{m+p}$ such that, for all $x\in \R^{m+p}$ and all $r>0$, 
$$\text{Vol}_m(B(x,r)\cap M)\le L \ r^m,$$
 where
$B(x,r)$ denotes the Euclidean ball of center $x$ and radius $r>0$ in $\R^{m+p}$. 
\end{defn}

The eigenvalues of submanifolds in $\mathcal M(m,p,L)$ are uniformly controlled. Indeed, one has the following

\begin{thm} \label{concentration} Let $m\ge 2$ and $p\ge1$  be two integers and let $L$ be a positive real number.  
For all $M\in
\mathcal M (m,p,L)$ and all $k\ge 1$, we have
$$\kthfunchigh\le C(m) L^{2/m} k^{2/m},$$
where $C(m)$ is a constant depending only on $m$ given explicitly in \eqref{eqn:bigC}. 
\end{thm}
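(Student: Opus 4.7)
The plan is to invoke the variational characterization of $\lambda_k$: it suffices to produce $k+1$ Lipschitz functions $\phi_0,\ldots,\phi_k$ on $M$ with pairwise disjoint supports, each satisfying
\[
\frac{\int_M|\nabla\phi_i|^2\,dv_g}{\int_M\phi_i^2\,dv_g}\le C(m)L^{2/m}k^{2/m}\text{Vol}(M)^{-2/m}.
\]
These functions will be built as Euclidean cutoffs of capacitors: to each pair of concentric balls $(B(x_i,r_i),B(x_i,2r_i))\subset\R^{m+p}$, I associate $\phi_i(y)=\max(0,\min(1,2-|y-x_i|/r_i))$, which is $(1/r_i)$-Lipschitz on $M$, equals $1$ on $B(x_i,r_i)\cap M$, and is supported in $B(x_i,2r_i)\cap M$; the supports are pairwise disjoint as soon as the outer balls $B(x_i,2r_i)$ are.

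The heart of the argument is thus to produce, for any $k\ge 1$, a family of $k+1$ capacitors whose outer balls are pairwise disjoint in $\R^{m+p}$ and such that (i) $\text{Vol}_m(B(x_i,r_i)\cap M)\ge c_1(m)\text{Vol}(M)/k$ and (ii) $r_i\le c_2(m)(\text{Vol}(M)/(Lk))^{1/m}$. I would do this via a Grigor'yan--Netrusov--Yau-style covering argument applied to the metric measure space $(M,d_{\text{Eucl}},\mathcal{H}^m|_M)$: starting from Vitali selections at successive dyadic scales and combining the resulting collections carefully, one extracts capacitors satisfying (i) and (ii) using only the hypothesis $\text{Vol}_m(B(x,r)\cap M)\le L r^m$. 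Because this $m$-dimensional growth bound is the sole measure-theoretic input, the resulting constants depend only on $m$ and are independent of the codimension $p$.

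With the capacitors in hand, the Rayleigh quotients are bounded directly. The Lipschitz control on $\phi_i$ and the volume hypothesis applied at scale $2r_i$ give
\[
\int_M|\nabla\phi_i|^2\,dv_g\le \frac{\text{Vol}_m(B(x_i,2r_i)\cap M)}{r_i^2}\le 2^m L r_i^{m-2},
\qquad
\int_M\phi_i^2\,dv_g\ge c_1(m)\frac{\text{Vol}(M)}{k},
\]
and substituting the bound in (ii) on $r_i$ collapses the ratio to $C(m)L^{2/m}k^{2/m}\text{Vol}(M)^{-2/m}$, as desired.

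The main obstacle is the capacitor construction itself: simultaneously arranging disjointness of the outer balls together with (i) and (ii), with constants depending only on $m$. A naive Vitali covering yields many disjoint balls but offers no control on how mass is distributed across them, so a multi-scale (dyadic) refinement is essential. The radius upper bound in (ii) is the most delicate requirement, since it is precisely what forces the sharp exponent $k^{2/m}$ (rather than a weaker $k^{1+2/m}$) in the final inequality, in agreement with the Weyl law.
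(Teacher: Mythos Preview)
Your capacitor conditions (i) and (ii) are mutually incompatible in general, so the construction you outline cannot succeed. Take $M=S^{m-1}(\epsilon)\times[0,T]$ (smoothly capped) in $\R^{m+1}$ with $\epsilon$ small and $T$ large: then $\text{Vol}(M)\asymp\epsilon^{m-1}T$, one checks $M\in\mathcal M(m,1,L)$ with $L$ depending only on $m$, and for $\epsilon\ll r\ll T$ one has $\text{Vol}_m(B(x,r)\cap M)\asymp\epsilon^{m-1}r$. Condition~(i) then forces $r_i\gtrsim T/k$, while condition~(ii) forces $r_i\lesssim(\epsilon^{m-1}T/k)^{1/m}$; these are contradictory as soon as $T/k\gg\epsilon$. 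The underlying point is that the hypothesis $\text{Vol}_m(B(x,r)\cap M)\le Lr^m$ is only an \emph{upper} bound on ball masses: when the measure is spread thinly along a tube, no Euclidean ball of the small radius demanded by~(ii) can carry the mass demanded by~(i). You yourself flag~(ii) as ``the most delicate requirement''; in fact it cannot be met. The paper avoids this obstruction by not using balls at all. It invokes the Colbois--Maerten decomposition to produce $2k+1$ \emph{arbitrary} measurable sets $A_i$ with $\mu(A_i)\gtrsim\text{Vol}(M)/k$, pairwise separated by a fixed gap $3r$ with $r\asymp(\text{Vol}(M)/(Lk))^{1/m}$, and then---crucially---controls the numerator not through the $L$-bound on the support but by a pigeonhole step: among the $2k+1$ sets, at least $k+1$ satisfy $\mu(A_i^r)<\text{Vol}(M)/k$, whence $R(\varphi_i)\le\mu(A_i^r)/(r^2\mu(A_i))$ gives the correct $k^{2/m}$ directly.

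There is a second, independent gap. Your assertion that the constants depend only on $m$ ``because this $m$-dimensional growth bound is the sole measure-theoretic input'' overlooks the \emph{metric} input: every Vitali or GNY-type selection in $(\R^{m+p},d_{\mathrm{Eucl}})$ relies on the covering multiplicity of Euclidean balls, which is of order $C^{m+p}$ and thus genuinely $p$-dependent. The paper's proof first produces a constant $C(m,p)$ and then removes the $p$-dependence by appealing to the Nash embedding theorem, so that without loss of generality $p\le 2m^2+5m$; this step is absent from your outline.
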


The result of Dodziuk and the first author \cite{CD} together with Theorem  \ref{betterestimate}, Theorem  \ref{concentration} and Reilly's inequality tell us that, given a smooth manifold $\bar M$ of dimension $m\ge 3$, there  exist Riemannian metrics $g$ of volume one on $\bar M$ such that any immersion of $\bar M$ into a Euclidean space $\R^{m+p}$ which preserves $g$ must have a very large \indice, very large total mean curvature, and volume which concentrates into a small Euclidean ball.  More precisely,

\begin{cor} Let $\bar M$ be a compact smooth manifold of dimension $m\ge 3$. For every integer $K>0$, there exists a Riemannian metric $g_K$ of volume one on $\bar M$ such that, for \textbf{any} isometric immersion $X$ from $(\bar M,g_K)$ into a Euclidean space $\R^{m+p}$ (with arbitrary $p$), the submanifold $M= X(\bar M)$ satisfies the following conditions:
\begin{enumerate}
	\item there exists a $p$-plane $\Pi\subset \R^{m+p}$ transverse to $M$ which intersects $M$ at least $K$ times.
	\item $\|H( M)\|_2 >K  $
	\item there exists a Euclidean ball $ B(x,r)\subset\R^{m+p}$ such that the volume of the portion of $M$ lying in $ B(x,r)$ is larger than the volume of $K$ Euclidean balls of radius $r$ and dimension $m$, that is $$1\ge \text{Vol}_m(B(x,r)\cap  M)>  K \text{Vol}_m(B^m) r^m.$$
	
\end{enumerate}

\end{cor}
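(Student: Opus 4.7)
The plan is to use the construction of Dodziuk and the first author \cite{CD} to select $g_K$ with sufficiently large first Laplace eigenvalue, and then to extract each of the three properties as the contrapositive of an upper bound already established in this paper.

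Recall from \cite{CD} that on any compact smooth manifold $\bar M$ of dimension $m\ge 3$ and for any prescribed $\Lambda>0$ there exists a Riemannian metric of unit volume on $\bar M$ whose first positive Laplace eigenvalue exceeds $\Lambda$. I would first compute three thresholds $\Lambda_1,\Lambda_2,\Lambda_3$, one per conclusion, and then apply \cite{CD} with $\Lambda=\max\{\Lambda_1,\Lambda_2,\Lambda_3\}$ to obtain $g_K$.

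For any isometric immersion $X:(\bar M,g_K)\to\R^{m+p}$ with image $M$, we have $\text{Vol}(M)=1$ and $\lambda_1(M)=\lambda_1(\bar M,g_K)>\Lambda$. For (1), Theorem \ref{betterestimate} bounds $\lambda_1(M)$ above by an increasing function of $i(M)$, so a suitable $\Lambda_1$ (essentially the right-hand side of that bound with $i(M)$ replaced by $K$) forces $i(M)\ge K$, yielding a transverse $p$-plane that meets $M$ at least $K$ times. For (2), Reilly's inequality reads $\lambda_1(M)\le m\|H(M)\|_2^2/\text{Vol}(M)=m\|H(M)\|_2^2$, so the choice $\Lambda_2=mK^2$ delivers $\|H(M)\|_2>K$. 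For (3), Theorem \ref{concentration} applied with $L=K\,\text{Vol}_m(B^m)$ says that if $\text{Vol}_m(B(x,r)\cap M)\le L r^m$ holds for every $x$ and every $r$, then $\lambda_1(M)\le C(m)L^{2/m}$; choosing $\Lambda_3$ just above $C(m)(K\,\text{Vol}_m(B^m))^{2/m}$ therefore negates this uniform concentration bound and produces a ball $B(x,r)$ in which the volume concentrates at the prescribed rate, while the trivial inequality $\text{Vol}_m(B(x,r)\cap M)\le\text{Vol}(M)=1$ supplies the upper bound.

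I do not anticipate a genuine obstacle: the argument amounts to three parallel contrapositive applications of results already at hand, driven by a single invocation of \cite{CD}. The only minor point requiring care is in (1), where one uses that $\#(M\cap\Pi)\in\mathbb{Z}_{\ge 0}\cup\{\infty\}$ for transverse $\Pi$, so that the strict inequality $i(M)>K-1$ delivered by Theorem \ref{betterestimate} automatically upgrades to $i(M)\ge K$.
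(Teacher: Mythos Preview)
Your proposal is correct and follows exactly the approach the paper indicates: the paper does not give a formal proof of this corollary but simply states that it follows from \cite{CD} together with Theorem~\ref{betterestimate}, Theorem~\ref{concentration}, and Reilly's inequality, which is precisely the trio of contrapositive applications you outline. Your added care about the integrality of $\#(M\cap\Pi)$ in item~(1) is a fair point and introduces no difficulty.
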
 

In the three theorems above, there is no restriction on the codimension of the submanifold.  Therefore, our bounds on the eigenvalues depend on purely extrinsic assumptions, in the sense that all differential structures and  Riemannian metrics are allowed. An interesting question is to know what happens if the codimension is assumed to be 1. \\

\noindent\emph{Is the first eigenvalue bounded upon the set of all compact hypersurfaces of $\R^{m+1}$ of fixed volume and (for $m\ge 3$) fixed smooth structure}?\\

We conclude by providing a negative answer to this question. Indeed, putting together results from the literature, we prove the following:
\begin{thm}\label{large}
\begin{enumerate}
\item There exists a sequence of compact orientable surfaces $M_n$ embedded in $\R^3$ such that 
$$\lambda_1(M_n) \text{Vol}(M_n) {\longrightarrow} \infty$$
as $n \to \infty$.
\item Let $p_0$ be a positive integer and let $M$ be any compact smooth submanifold of dimension $m\ge 3$ of $\R^{m+p_0}$. Then there exists a sequence $M_n$ of smooth submanifolds embedded in $\R^{m+p_0}$ all diffeomorphic to $M$, such that 
$${\lambda_1(M_n) \text{Vol}(M_n)^{2/m}}{\longrightarrow} \infty$$
as $n \to \infty$.
\end{enumerate}
\end{thm}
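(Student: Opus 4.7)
The plan is to combine two ingredients for each part: an intrinsic existence theorem, furnishing Riemannian metrics on the abstract manifold $\bar M$ with arbitrarily large normalized first eigenvalue $\lambda_1 \text{Vol}^{2/m}$, and an extrinsic realization theorem, showing that these metrics, or close approximations of them, are induced by smooth embeddings into $\R^{m+p_0}$. Since the Rayleigh quotient depends continuously on the metric in the $C^0$ topology, stability of $\lambda_1$ transfers the intrinsic blow-up to the sequence of embedded submanifolds $M_n$.

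For part (1), the Yang--Yau / El Soufi--Ilias bound $\lambda_1 \text{Vol} \le 8\pi\lfloor(\gamma+3)/2\rfloor$ recalled in the introduction shows that any admissible sequence must have genus $\gamma_n \to \infty$, so one cannot work within a fixed topological type. The intrinsic input is then a classical theorem of Buser (later refined by Brooks--Makover and Mirzakhani) producing, for every $\gamma \ge 2$, a closed hyperbolic surface $(\Sigma_\gamma, g_\gamma)$ with $\lambda_1(g_\gamma)$ bounded below by a positive constant $c_0$ independent of $\gamma$; Gauss--Bonnet then yields $\lambda_1(g_\gamma)\text{Vol}(g_\gamma) \ge 4\pi c_0 (\gamma-1) \to \infty$. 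Since Efimov's theorem forbids a $C^2$ isometric embedding of a compact hyperbolic surface in $\R^3$, the realization step proceeds via a $C^1$-isometric embedding of Nash--Kuiper type followed by a $C^\infty$-smoothing whose induced metric is $C^0$-close to $g_\gamma$; eigenvalue stability then yields smooth embedded surfaces $M_n \subset \R^3$ with $\lambda_1(M_n)\text{Vol}(M_n) \to \infty$.

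For part (2), the hypothesis $m \ge 3$ makes the intrinsic step immediate via the result of Colbois--Dodziuk \cite{CD} already cited in the introduction, producing metrics $g_n$ of unit volume on $\bar M$ with $\lambda_1(g_n) \to \infty$. The Colbois--Dodziuk metrics are obtained by modifying a base metric only in a small region, via an anisotropic pinching; starting from the given embedding $X_0 : \bar M \hookrightarrow \R^{m+p_0}$ one can implement this local surgery inside a normal tubular neighborhood of $X_0(\bar M)$ in any codimension $p_0 \ge 1$. This yields smooth embeddings $X_n : \bar M \to \R^{m+p_0}$, hence diffeomorphic to $M$, whose induced metric is $C^0$-close to $g_n$; continuity of $\lambda_1$ then finishes the argument.

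The main obstacle in both parts is the extrinsic realization in the prescribed, possibly small, codimension. In part (1) this is aggravated by the Efimov rigidity obstruction, forcing one to approximate hyperbolic metrics by induced metrics of smooth embeddings rather than realize them exactly; in part (2), the codimension may be too small for a direct application of Nash's $C^\infty$-isometric embedding theorem, so one must instead exploit the local character of the Colbois--Dodziuk construction. Both difficulties reduce to standard flexibility results combined with stability of $\lambda_1$ under $C^0$-perturbations of the metric.
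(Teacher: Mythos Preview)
Your treatment of part~(1) is essentially the paper's proof: Buser-type hyperbolic surfaces for the intrinsic input, Nash--Kuiper $C^1$ isometric embedding into $\R^3$, then $C^\infty$ approximation in the $C^1$ topology, and eigenvalue stability under the resulting near-isometry. Fine.

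For part~(2), however, you diverge from the paper and introduce a gap. You propose to realize the Colbois--Dodziuk metrics extrinsically by ``implementing the local surgery inside a normal tubular neighborhood'' of the original embedding. This is not justified: the Colbois--Dodziuk construction modifies the metric in a ball so that it carries, roughly speaking, a large round sphere attached through a degenerating neck, and there is no reason such a metric (or a $C^0$-close one) is induced by a smooth embedding into a fixed tubular neighborhood in codimension~$1$. A tubular neighborhood gives you room to perturb the embedding, not to prescribe the induced metric; turning a local metric deformation into a local embedding deformation with approximately the same induced metric is precisely an isometric-embedding problem, which you have not solved.

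The paper avoids this entirely by observing that the Kuiper $C^1$ isometric embedding theorem already works in \emph{every} codimension $p_0\ge 1$, not just for surfaces in $\R^3$. So the argument for part~(2) is identical to part~(1): take any Colbois--Dodziuk metric $g$ on $\bar M$ with $\lambda_1(g)\mathrm{Vol}(g)^{2/m}\ge 2K$, apply Kuiper to get a $C^1$ isometric embedding $Y:(\bar M,g)\to\R^{m+p_0}$ (this uses only that $\bar M$ admits \emph{some} $C^1$ embedding into $\R^{m+p_0}$, which it does since $M\subset\R^{m+p_0}$), then $C^1$-approximate $Y$ by a smooth embedding $X$. Your concern that ``the codimension may be too small for Nash's $C^\infty$ theorem'' is correct but irrelevant, since Kuiper's $C^1$ theorem has no such restriction and the smoothing step does not require isometry.
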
 
In other words, for any fixed compact submanifold $M$ of dimension $m\ge 3$ of $\R^{m+p_0}$,
$$\sup_X\lambda_1(X(M)) \text{Vol}(X(M))^{2/m}=\infty$$
where the supremum is taken over all embeddings from $M$ into  $\R^{m+p_0}$.

The structure of the paper is as follows. In section \ref{sec:enoughvol} we study the volume of the projection of an $m$-dimensional submanifold onto an $m$-plane and explain how the control on the \indice \ implies a control on the volume concentration. In section \ref{sec:better} we prove
 Theorem \ref{betterestimate} using the barycenter method and estimates from section \ref{sec:enoughvol}. In section \ref{sec:concentration} we give the proofs of Theorem \ref{concentration} and Theorem \ref{mainth}. In the last section we explain how we get Theorem \ref{large}.


\section{Volume of orthogonal projections onto $m$-planes  \label{sec:enoughvol}}

Let $M$ be an immersed submanifold of dimension $m \geq 2$ in $\R^{m+p}$. In this section, we do not need to assume that $M$ is compact but only that $M$ has finite volume and finite \indice \ $i(M)$. 

We begin by proving that there exists an $m$-dimensional linear subspace $H \subset \R^{m+p}$ such that the volume of the image of $M$ under the orthogonal projection on $H$ is bounded below in terms of $\text{Vol}(M)$ and $i(M)$. 

Let $G:=G(m,m+p)$ be the Grassmannian of $m$-planes through the origin in $\R^{m+p}$. 
To each $m$-plane $H$ in $G$  we associate  
the orthogonal projection $\pi_H:M \subset\R^{m+p} \rightarrow H$. From the definition of $i(M)$ it is immediate that, generically, at most $i(M)$ points on $M$ have the same image under $\pi_H$. The volumes of $\pi_H(M)$ 
and of $M$ are related as in the following lemma.

\begin{lemma} We endow $G$ with its O(n)-invariant Radon measure of total volume one (see, e.g., \cite{Mat}).
Then, 
\[
\int_{G} \text{Vol} (\pi_H (M)) dH \geq \frac{2}{i(M)} \frac{\text{Vol}(B^m)}{\text{Vol}(\mathbb{S}^m)} \text{Vol}(M),
\]
with $ \frac{\text{Vol}(B^m)}{\text{Vol}(\mathbb{S}^m)}=\frac{1}{m \sqrt{\pi}}\frac{\Gamma(\frac {m+1}{2})}{\Gamma(\frac{m}{2})}$.
\end{lemma}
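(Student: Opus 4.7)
I will prove the lemma by combining three ingredients: the area formula for the orthogonal projection $\pi_H : M \to H$, the upper bound on the cardinality of the generic fibre coming from the definition of $i(M)$, and Fubini's theorem on $G \times M$, which recognizes the average of the Jacobian of $\pi_H$ over the Grassmannian as a constant depending only on $m$.

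For each fixed $H \in G$, view $\pi_H|_M : M \to H$ as a smooth map between $m$-manifolds; its Jacobian at $x \in M$ is $J(H, x) := |\det(\pi_H|_{T_xM})|$, the determinant of the orthogonal projection from the $m$-plane $T_xM$ onto the $m$-plane $H$. Since the fibre $\pi_H^{-1}(y) = y + H^\perp$ is an affine $p$-plane in $\R^{m+p}$, the very definition of $i(M)$ gives $\#\{x \in M : \pi_H(x) = y\} \leq i(M)$ for a.e.\ $y \in H$. Combined with the area formula this yields
\begin{equation*}
\int_M J(H, x)\, dV_M(x) \;=\; \int_H \#\bigl\{x \in M : \pi_H(x) = y\bigr\}\, dy \;\leq\; i(M)\,\text{Vol}(\pi_H(M)).
\end{equation*}
Integrating over $H \in G$ with the normalized invariant measure and applying Fubini on the left gives
\begin{equation*}
i(M) \int_G \text{Vol}(\pi_H(M))\, dH \;\geq\; \int_M \left( \int_G J(H, x)\, dH \right) dV_M(x).
\end{equation*}

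By the $O(m+p)$-invariance of the Radon measure on $G$, the inner integral $\int_G J(H, x)\, dH$ depends only on the $m$-plane $T_xM$, and a second invariance argument shows it is the same for every $m$-plane; call this constant $c_m$. To identify $c_m$ explicitly, I would run the whole argument on the test submanifold $M = \mathbb{S}^m \subset \R^{m+1}$, where every step becomes an equality: $i(\mathbb{S}^m) = 2$, every interior point of $\pi_H(\mathbb{S}^m) = B^m$ has exactly two preimages, and $\text{Vol}(\pi_H(\mathbb{S}^m)) = \text{Vol}(B^m)$ for every $H \in G(m, m+1)$. Reading off equality gives $c_m\, \text{Vol}(\mathbb{S}^m) = 2\,\text{Vol}(B^m)$, and the algebraic simplification into $\frac{1}{m\sqrt{\pi}}\frac{\Gamma((m+1)/2)}{\Gamma(m/2)}$ is a routine manipulation of Gamma functions.

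The first two paragraphs reduce the lemma to the identification of the Grassmannian integral $c_m = \int_G J(H, x)\, dH$; this is the substantive point. It can be handled either by the classical Cauchy-Kubota-type identity from integral geometry (cf.\ \cite{Mat}), by a direct computation on the Grassmannian with its $O(m+p)$-invariant measure, or --- as sketched above --- by the slick calibration via $\mathbb{S}^m \subset \R^{m+1}$ that turns the fibre-multiplicity inequality into an equality. Beyond this identification, the argument is otherwise essentially formal.
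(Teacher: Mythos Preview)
Your argument is essentially the paper's: define the Jacobian $J(H,x)$ of $\pi_H|_M$ (the paper writes it as $|\theta_H|$ via $\pi_H^*v_H=\theta_H v_M$), use the area formula together with the fibre bound $\#\pi_H^{-1}(y)\le i(M)$ to get $\int_M J(H,x)\,dV_M\le i(M)\,\mathrm{Vol}(\pi_H(M))$, integrate over $G$ and apply Fubini, then observe by $O(m+p)$-invariance that $\int_G J(H,x)\,dH$ is a constant, and finally identify the constant by testing on the round sphere. Step for step this matches the paper's proof.

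One point to tighten. The constant you need is $c_m=\int_{G(m,m+p)}J(H,x)\,dH$, an integral over the Grassmannian in the \emph{same} ambient space $\R^{m+p}$ as $M$. Your calibration runs the argument on $\mathbb S^m\subset\R^{m+1}$, which identifies the analogous integral over $G(m,m+1)$; as written, you have not argued that this value coincides with the one over $G(m,m+p)$ for $p>1$. The paper avoids this by calibrating on $\mathbb S^m\subset\R^{m+p}$, so that the test case lives in the correct Grassmannian. Either switch your test to $\mathbb S^m\subset\R^{m+p}$ as the paper does, or add an explicit justification that $\int_{G(m,n)}|\det(\pi_H|_T)|\,dH$ is independent of the ambient dimension $n$.
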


\begin{proof}  Let $H\in G$ be an $m$-plane through the origin endowed with its standard volume element $v_H$.  Let $\theta_H$ be the function on $M$ defined by $$\pi_H^* v_H = \theta_H v_M,$$
 where $v_M$ is the Riemannian volume element of $M$. If $\xi_1, \ldots, \xi_m$ constitute an orthonormal basis of the tangent space to $M$ at a point $x$, then 
\[
\theta_H(x) = \pm\det (\pi_H(\xi_1(x)), \ldots, \pi_H(\xi_m(x))) ,
\]
where the determinant is taken with respect to an orthonormal basis of $H$.

Since a generic point in $ \pi_H(M)$ has at most $i(M)$ preimages under the map $\pi_H:M \rightarrow \pi_H(M) \subset H$, one easily checks that   
\[
\int_M \vert\pi_H^* v_H\vert= \int_M \vert\theta_H(x)\vert v_M\leq i(M) \int_{\pi_H(M)} v_H = i(M) \text{Vol}(\pi_H(M)).
\]
Integrating over $G$ we get 
\begin{eqnarray}\label{eqn:vol}
i(M) \int_{G} \text{Vol}(\pi_H(M))dH & \geq &  \int_{G}dH \int_M \vert\theta_H(x)\vert v_M \\
\nonumber & = & \int_M \left( \int_{G}\vert\theta_H(x)\vert dH\right) v_M.
\end{eqnarray}
Now, from the definition of $\theta_H$ and the invariance of the measure of $G$, we easily deduce that the integral $I(G)=\int_{G}\vert\theta_H(x)\vert dH$ does not depend on the point $x$.
Indeed, if $\rho\in O(n)$ is such that $\rho\cdot T_yM = T_xM$, then for all $H\in G$ and $V\in T_yM$,  $\rho\cdot\pi_H (V) = \pi_{\rho\cdot H}(\rho\cdot V)$ and $\vert\theta_{H}(y)\vert= \vert\theta_{\rho\cdot H}(x)\vert$.
Hence,
\begin{eqnarray}\label{I(G)}
i(M) \int_{G} \text{Vol}(\pi_H (M))dH  \geq  I(G)\text{Vol}(M).
\end{eqnarray}
To determine the value of $I(G)$, we consider the case of the sphere $\mathbb{S}^m \subset \R^{m+p}$.  In this case, $i(\mathbb{S}^m) = 2$ and, for all $H \in G$,
$$\int_{\mathbb{S}^m}\vert\theta_H(x)\vert v_{\mathbb{S}^m}= \int_{\mathbb{S}^m} \vert\pi_H^* v_H\vert =2\text{Vol}(\pi_H(\mathbb{S}^m))=2\text{Vol}( B^m). $$
Hence, 
 $$ I(G)\text{Vol}({\mathbb{S}^m})=\int_{G}dH \int_{\mathbb{S}^m}\vert\theta_H(x)\vert v_{\mathbb{S}^m} =2 
 \text{Vol}( B^m).$$
Substituting into (\ref{I(G)}) we obtain the desired inequality.
\end{proof}
\begin{rem}
One can prove that the preceding lemma holds for more general functions than projections.  
However, this makes both the statement and the proof of the result more complicated, 
and we will not need this full generality in what follows.
\end{rem}

\begin{cor}\label{cor1}
There exists an $m$-plane $H \in G$ such that 
\[
\text{Vol}(\pi_H(M)) \geq  \frac{2}{i(M)}\frac{\text{Vol}(B^m)}{\text{Vol}(\mathbb{S}^m)}\text{Vol}(M).
\]
\end{cor}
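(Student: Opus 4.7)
The plan is to derive the corollary as an immediate pigeonhole/averaging consequence of the preceding lemma. Since $G$ is equipped with an $O(n)$-invariant Radon probability measure (total volume one), the lemma provides the lower bound
\[
\int_{G} \text{Vol}(\pi_H(M))\, dH \;\geq\; \frac{2}{i(M)} \frac{\text{Vol}(B^m)}{\text{Vol}(\mathbb{S}^m)} \text{Vol}(M)
\]
on the \emph{average} value of the nonnegative function $H\mapsto \text{Vol}(\pi_H(M))$ on $G$.

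The key observation is that whenever a measurable function has average at least $\alpha$ on a probability space, it must exceed $\alpha$ somewhere (in fact on a set of positive measure); otherwise its integral would be strictly smaller than $\alpha$. Applying this principle to $\text{Vol}(\pi_{\cdot}(M))$ on $(G,dH)$ immediately yields the existence of some $H\in G$ with
\[
\text{Vol}(\pi_H(M)) \;\geq\; \frac{2}{i(M)}\frac{\text{Vol}(B^m)}{\text{Vol}(\mathbb{S}^m)}\text{Vol}(M),
\]
which is the claim.

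There is essentially no obstacle: the only thing to verify is that $H\mapsto \text{Vol}(\pi_H(M))$ is measurable, which follows from the joint continuity (indeed smoothness) of the projection map in $H$ together with Fatou's lemma or dominated convergence, and this measurability is already implicit in the integral appearing in the lemma. Thus the corollary is a one-line consequence and requires no further technical work beyond invoking the mean value principle on the probability space $G$.
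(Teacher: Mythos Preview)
Your proposal is correct and is exactly what the paper intends: the corollary is stated without proof immediately after the lemma, so the implicit argument is precisely the averaging/pigeonhole step you give, using that $dH$ is a probability measure on $G$.
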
  

\medskip
It is possible to apply these considerations to the intersection of $M$ with a Euclidean ball $B(x,r)$  of center $x$ and radius $r>0$ in $\R^{m+p}$. Therefore, there exists an $m$-plane $H \in G$ (depending on $x$ and $r$) such that 
\[
\text{Vol}(\pi_H(M \cap B(x,r))) \geq  \frac{2}{i(M)}\frac{\text{Vol}(B^m)}{\text{Vol}(\mathbb{S}^m)}\text{Vol}(M\cap B(x,r)).
\]

On the other hand,  $\pi_H(M \cap B(x,r))$ is contained in $\pi_H( B(x,r))$, which is an $m$-dimensional Euclidean ball of radius $r$ in the $m$-plane $H$; thus
$$\text{Vol}(\pi_H(M \cap B(x,r))) \le r^m \text{Vol}(B^m).$$
Hence, we have also proved the following

\begin{prop} \label{fundprop} For all $x \in \R^{m+p}$ and all $r>0$, we have
\[
 \text{Vol}(M \cap B(x,r)) \le \frac{i(M)}{2}  \text{Vol}(\mathbb{S}^m)\ r^m .
\]
\end{prop}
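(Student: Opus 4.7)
The plan is to apply Corollary \ref{cor1} not to $M$ itself but to its portion $M\cap B(x,r)$ inside the Euclidean ball, then exploit the fact that the orthogonal projection of a ball is again a ball of the same radius.

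First I would observe that for any $x\in\R^{m+p}$ and $r>0$, the piece $M\cap B(x,r)$ is an $m$-dimensional immersed submanifold (possibly with boundary, but this plays no role in the argument) of finite volume, and its intersection index satisfies $i(M\cap B(x,r))\le i(M)$: indeed, any $p$-plane $\Pi$ transverse to $M$ meets $M\cap B(x,r)$ in a subset of $\Pi\cap M$, and hence in at most $i(M)$ points. The proof of the preceding lemma and of Corollary \ref{cor1} only used the finiteness of the volume and of the intersection index, so they apply verbatim to $M\cap B(x,r)$, yielding an $m$-plane $H\in G$ (depending on $x$ and $r$) such that
\[
\text{Vol}(\pi_H(M\cap B(x,r)))\ \ge\ \frac{2}{i(M)}\frac{\text{Vol}(B^m)}{\text{Vol}(\mathbb{S}^m)}\,\text{Vol}(M\cap B(x,r)).
\]

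Second, since the orthogonal projection $\pi_H:\R^{m+p}\to H$ is $1$-Lipschitz, $\pi_H(M\cap B(x,r))\subset\pi_H(B(x,r))$, and the right-hand side is an $m$-dimensional Euclidean ball of radius $r$ in $H$. Hence
\[
\text{Vol}(\pi_H(M\cap B(x,r)))\ \le\ r^m\,\text{Vol}(B^m).
\]
Combining the two inequalities and solving for $\text{Vol}(M\cap B(x,r))$ produces exactly the desired bound
\[
\text{Vol}(M\cap B(x,r))\ \le\ \frac{i(M)}{2}\,\text{Vol}(\mathbb{S}^m)\,r^m.
\]

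There is essentially no serious obstacle: the content of the proposition is already packaged in Corollary \ref{cor1}, and the only thing to verify is the monotonicity $i(M\cap B(x,r))\le i(M)$, which is immediate from the definition of the intersection index. The slight subtlety is that $M\cap B(x,r)$ is not a closed submanifold, so one must note that the Lemma's argument—an integration of $|\theta_H|$ over $M$ against the Haar measure on $G$—made no use of compactness or of the absence of boundary, only of finite volume and a generic bound on fibers of $\pi_H$, both of which remain valid after intersecting with $B(x,r)$.
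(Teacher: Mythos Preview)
Your proof is correct and follows exactly the same approach as the paper: apply Corollary \ref{cor1} to the piece $M\cap B(x,r)$, then bound the projected volume by that of an $m$-ball of radius $r$. Your added remarks on the monotonicity $i(M\cap B(x,r))\le i(M)$ and on why the Lemma applies to a non-closed piece simply make explicit what the paper leaves implicit.
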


Roughly speaking, the control of $i(M)$ ensures that $M$ cannot concentrate
in small parts of $\R^{m+p}$. 


\section{Estimating $\lambda_1$ : proof of Theorem  \ref{betterestimate}} \label{sec:better} In this section, we prove the estimate of Theorem \ref{betterestimate} 
using the barycenter method.
Recall that $M=X(\bar M)$ is an $m$-dimensional compact  immersed submanifold in $\R^{m+p}$. 
We denote by $d_{m+p}$ and $d_m$ the usual ``distance to origin'' functions
in $\R^{m+p}$ and $\R^m$, respectively.

\smallskip
Up to a displacement, we can assume that the center of mass of $M$ is at the origin so that,
for $1\le i\le m+p$,
$$\int_Mx_iv_M=0$$
where $v_M$ denotes the induced volume element on $M$.
Hence, for each $1\le i\le m+p$, we have
\begin{equation}\label{eqn:ithRay}
\lambda_1(M)\int_Mx_i^2v_M \le \int_M\vert \nabla x_i\vert^2v_M.
\end{equation}

Let $\{\xi_k\}_{k=1}^m$ be an orthonormal basis for the tangent space to $M$ at a point $x$, and let $\{e_i\}_{i=1}^{m+p}$ be the standard orthonormal basis of $\mathbb{R}^{m+p}$.  Then we have  
\begin{eqnarray*}
\sum_{i=1}^{m+p}\vert \nabla x_i\vert^2 & = & \sum_{i=1}^{m+p} \sum_{k=1}^m < \nabla x_i, \xi_k>^2 \\
& = & \sum_{k=1}^m \sum_{i=1}^{m+p} < e_i, \xi_k >^2 = m.
\end{eqnarray*}
Summation in \eqref{eqn:ithRay} thus gives
\begin{equation}\label{eqlambda}
\lambda_1(M)\int_M\vert x \vert^2v_M \le m \text{Vol}(M).
\end{equation}

The key now is to obtain a lower bound for the integral $\int_M\vert x \vert^2v_M$, often called ``moment of inertia," which may also be written $\int_M d^2_{m+p}(x)v_M$. To this aim, we will use
Corollary \ref{cor1} and the following  

\begin{lemma} \label{l1} Let $\Omega$ be a domain in $\R^m$. Then 

$$\int_{\Omega} d_m^2(x)dx \ge \int_{\Omega^*}d_m^2(x)dx,$$
where $\Omega^*$ denotes the Euclidean ball in $\R^m$ centered at the origin and with the same volume as $\Omega$.
\end{lemma}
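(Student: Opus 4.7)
The plan is to use a bathtub-principle style rearrangement argument. Since $d_m^2(x)=|x|^2$ is radial and strictly increasing in $|x|$, among all measurable sets of a fixed volume the integral $\int_\Omega|x|^2\,dx$ should be minimized by the sublevel sets of $|x|^2$, i.e. Euclidean balls centered at the origin, which is exactly what $\Omega^*$ is.

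To make this rigorous I would first let $R>0$ denote the radius of $\Omega^*$, so that $\Omega^*=\{x\in\R^m:|x|\le R\}$ and $\mathrm{vol}(\Omega)=\mathrm{vol}(\Omega^*)$ (which forces $\mathrm{vol}(\Omega\setminus\Omega^*)=\mathrm{vol}(\Omega^*\setminus\Omega)$). Splitting both integrals along $\Omega\cap\Omega^*$, one obtains
$$\int_\Omega|x|^2\,dx-\int_{\Omega^*}|x|^2\,dx=\int_{\Omega\setminus\Omega^*}|x|^2\,dx-\int_{\Omega^*\setminus\Omega}|x|^2\,dx.$$
On $\Omega\setminus\Omega^*$ one has $|x|\ge R$, so the first term is bounded below by $R^2\,\mathrm{vol}(\Omega\setminus\Omega^*)$; on $\Omega^*\setminus\Omega$ one has $|x|\le R$, so the second term is bounded above by $R^2\,\mathrm{vol}(\Omega^*\setminus\Omega)$. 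By the equality of volumes just noted, the difference is non-negative, which is the desired inequality.

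There is essentially no obstacle here; the argument is a one-line application of the principle ``it is cheaper to place mass where the weight is smaller.'' The only point worth noting is that the identity $\mathrm{vol}(\Omega\setminus\Omega^*)=\mathrm{vol}(\Omega^*\setminus\Omega)$ requires $\Omega$ to have finite volume, which is automatic from the hypothesis that $\Omega^*$ is a well-defined Euclidean ball.
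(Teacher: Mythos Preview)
Your proof is correct and is essentially the same argument as the paper's: both let $R$ be the radius of $\Omega^*$, split along $\Omega\cap\Omega^*$, use $|x|\ge R$ on $\Omega\setminus\Omega^*$ and $|x|\le R$ on $\Omega^*\setminus\Omega$, and conclude via $\mathrm{vol}(\Omega\setminus\Omega^*)=\mathrm{vol}(\Omega^*\setminus\Omega)$. The only cosmetic difference is that you compute the difference of the two integrals directly, whereas the paper bounds $\int_\Omega d_m^2$ from below in two steps.
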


A short proof of this classical result (see, e.g., \cite[p.153]{PS}), kindly communicated by Asmaa Hasannezhad, 
goes as follows:
if  $\Omega^*$ is a ball of radius $R$ we have  
$\Omega=(\Omega \cap \Omega^*)\cup(\Omega \setminus \Omega^*),$ and
\begin{eqnarray*}
\int_{\Omega }d_m^2(x)dx & =& \int_{\Omega \cap \Omega^*}d_m^2(x)dx + \int_{\Omega \setminus \Omega^*}d_m^2(x)dx \\
& \geq & \int_{\Omega \cap \Omega^*}d_m^2(x)dx + \int_{\Omega \setminus \Omega^*}R^2dx \\
& \geq & \int_{\Omega^* }d_m^2(x)dx,
\end{eqnarray*}
since $\text{Vol}(\Omega \setminus \Omega^*)=\text{Vol}(\Omega^*\setminus \Omega)$.
\begin{proof}[Proof of Theorem \ref{betterestimate}] From
Corollary \ref{cor1}, there exists an $m$-plane $H \in G$  such that 
\begin{equation}\label{eqH}
\text{Vol}(\pi_H(M)) \geq  \frac{2}{i(M)}\frac{\text{Vol}(B^m)}{\text{Vol}(\mathbb{S}^m)}\text{Vol}(M).
\end{equation}
The idea is to compare the situations on $M$ and on $\Omega=\pi_H(M)$. Since the projection $\pi_H$ is length-decreasing, we have
\[
\int_M d^2_{m+p}(x)v_M \ge \int_Md_m^2(\pi_H(x))v_M.
\]
Note that there is a substantial loss of information when we perform the projection $\pi_H$; in particular, this inequality can be seen to be strict even in the case $M = \mathbb{S}^m$.

On the other hand, since $M$ is compact, almost every point of $\Omega=\pi_H(M)$ admits at least two preimages in $M$.  Thus 
\[
2\int_{\Omega}d_m^2(x)v_H \le \int_Md_m^2(\pi_H(x))\vert\pi_H^*v_H \vert\le \int_M d_m^2(\pi_H(x))v_M,
\]
where the inequality on the right is due to the fact that $\pi_H$ is volume decreasing (with the notation of \S \ref{sec:enoughvol},  $\pi_H^*v_H=\theta_Hv_M$ with $\vert\theta_H\vert \le 1$). Consequently, 
\[
\int_M d^2_{m+p}(x)v_M \ge 2\int_{\Omega}d_m^2(x)dx .
\]
Applying Lemma \ref{l1} we get, through obvious identification of $H$ with $\R^m$,
\begin{equation}\label{eqd}
\int_M d^2_{m+p}(x)v_M \ge 2\int_{\Omega}d_m^2(x)dx \ge 2\int_{\Omega^*}d_m^2(x)dx,
\end{equation}
where $\Omega^*$ is the ball in $H$ centered at the origin with  $\text{Vol}(\Omega^*)=\text{Vol}(\Omega)$. If we denote by $\rho$ the radius of the ball $\Omega^*$, then we have 
\[
\int_{\Omega^*}d_m^2(x)dx= \rho^{m+2}\int_{B^m}\vert x\vert ^2dx
\]
and $$\text{Vol}(\Omega^*)=\rho^m \text{\text{Vol}}(B^m).$$
Hence, 
$$\rho = \left(\frac{\text{Vol}(\Omega^*)}{\text{Vol}(B^m)}\right)^{\frac{1}{m}}=\left(\frac{\text{Vol}(\pi_H(M))}{\text{Vol}(B^m)}\right)^{\frac{1}{m}}$$
and 
\[
\int_{\Omega^*}d_m^2(x)v_H =\left(\frac{\text{Vol}(\pi_H(M))}{\text{Vol}(B^m)}\right)^{1+\frac{2}{m}} \int_{B^m}\vert x\vert^2dx.
\]
Combining this last equality with (\ref{eqd}) and  (\ref{eqH}) above, we get the following lower bound for $\int_M d^2_{m+p}(x)v_M$ :
\begin{equation}\label{eqint}
\int_M d^2_{m+p}(x)v_M \ge 2 \left(\frac{2\text{Vol}(M)}{i(M)\text{Vol}(\mathbb{S}^m)}\right)^{1+\frac{2}{m}} \int_{B^m}\vert x\vert^2dx.
\end{equation}
When we put this estimate into (\ref{eqlambda}) we get 
$$2\lambda_1(M)\left(\frac{2\text{Vol}(M)}{i(M)\text{Vol}(\mathbb{S}^m)}\right)^{1+\frac{2}{m}} \int_{B^m}\vert x\vert^2dx \le m \text{Vol}(M);$$
rearranging terms gives
\[
\lambda_1(M)\text{Vol}(M)^{2/m} \le 
\frac{\text{Vol}(\mathbb{S}^m)}{2\int_{B^m} \vert x\vert^2dx} m \text{Vol}(\mathbb{S}^m)^{2/m} \left(\frac{i(M)}{2}\right)^{1+\frac{2}{m}},
 \]
which completes the proof of Theorem \ref{betterestimate} since 
\[
\int_{B^m} \vert x\vert^2dx=\frac 1{m+2}\text{Vol}(\mathbb{S}^{m-1}).
\]
\end{proof}
 

\section{Proofs of Theorem \ref{mainth} and Theorem \ref{concentration} \label{sec:concentration}}

A classical way to construct test functions for the Rayleigh quotient 
on a Riemannian manifold is to consider a family of $k$ mutually disjoint geodesic balls of radius $2r$, and a corresponding family
of cut-off functions.  Each cut-off function takes the value $1$ on one geodesic ball of radius $r$ and $0$ outside the corresponding
ball of radius $2r$. To estimate the Rayleigh quotient we need some information about the local
geometry of $M$, in general in terms of a lower bound on the Ricci curvature so that Bishop-Gromov volume comparison holds. This method is not convenient for the eigenvalues of the Neumann Laplacian on a domain or for the eigenvalues of a compact submanifold. In \cite{CM}, Maerten and the first author introduced a more elaborate family of sets. The construction that they propose is a metric one and can be adapted to 
various situations.  We explain the method in the case of submanifolds.

Let $M$ be a compact, connected immersed submanifold of dimension $m \geq 2$ in $\R^{m+p}$ and let $L >0$ be such that $M\in
\mathcal M (m,p,L)$. We endow $\R^{m+p}$ with the Euclidean distance $d$ and the Borel measure $\mu$ with support in $M$ defined by: 
$$\mu(A):= \text{Vol}_m(M\cap A).$$
The hypotheses (H1) and (H2) of section 2  of \cite{CM} are satisfied. 
In the present situation, (H1) states that we can cover a ball in $\R^{m+p}$ of radius $4r$ by a bounded number of balls of radius $r$, while (H2) states that the measure of the $r$-balls tends uniformly to zero with the radius $r$.
Indeed, in $\R^{m+p}$, the number $C(r)$ of balls of radius $r$ we need to cover a ball of radius $4r$ is controlled independently of $r$; for example, we can take $C(r)\le 8^{m+p}$  (see  Example 2.1 of \cite{CM} for a more general estimate). On the other hand, the fact that $M$ is in $\mathcal M(m,p,L)$ implies that
 for each $x \in \R^{m+p}$  and $r>0$, 
$$\mu(B(x,r)) \le L r^m.$$
Thus $\mu(B(x,r))$ tends to zero with $r$ uniformly with respect to $x$.
Therefore,   Corollary 2.3 of \cite{CM} enables us to state the following
\begin{prop}\label{resultCM}
 Let $K$ be a positive integer and  $\alpha$  a positive real number with $\alpha \leq \frac{\omega}{2\cdot 8^{m+p}K}$, where $\omega = \mu(\R^{m+p}) = \text{Vol}(M)$. Let $r>0$ be 
such that 
$$2\ \sup\left\{ 8^{m+p}\mu(B(x,r))\; ; \; x \in \R^{m+p}\right\} \leq \alpha.$$
Then there exist $K$ measurable subsets $A_1, \ldots, A_K \subset \R^{m+p}$ such that $\mu(A_i) \geq \alpha$ 
and, for each $i \neq j$, $d(A_i,A_j) \geq 3r$. 
\end{prop}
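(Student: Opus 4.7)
The plan is to construct $A_1,\ldots,A_K$ by a greedy iteration: at stage $i$, extract a ball-like piece $A_i$ of $\mu$-mass at least $\alpha$, then remove its $3r$-neighborhood from further consideration, so that subsequent $A_j$'s are automatically $3r$-separated from $A_i$. Hypotheses (H1) and (H2), together with the prescribed sizes of $r$ and $\alpha$, are calibrated to make $K$ such iterations succeed without depleting the total mass.

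The first step is to combine (H1) with the hypothesis on $r$ into a concrete ``no-concentration'' estimate: since $\mu(B(x,r))\le \alpha/(2\cdot 8^{m+p})$ for every $x$ and any ball of radius $4r$ in $\R^{m+p}$ is covered by at most $8^{m+p}$ balls of radius $r$, subadditivity of $\mu$ gives $\mu(B(x,4r))\le \alpha/2$ for every $x\in\R^{m+p}$. Hence no single ball of moderate radius can carry too much of the total mass, and in particular the $3r$-enlargement of any ball of radius $2r$ has controlled $\mu$-mass.

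For the extraction step, at stage $i$ I look at the residual measure $\mu_i$ obtained by restricting $\mu$ to the complement of the regions already discarded, with total mass still at least $\omega/2$. Choose a maximal $r$-separated family of points in $\mathrm{supp}\,\mu_i$; by maximality the balls of radius $2r$ around these centers cover the support of $\mu_i$. A pigeonhole argument, applied after first cutting the problem down to a region of bounded $\mu_i$-mass via the covering bound (H1), yields a center $x_i$ with $\mu_i(B(x_i,2r))\ge \alpha$. Set $A_i:=B(x_i,2r)\cap M\cap\mathrm{supp}\,\mu_i$ and discard its $3r$-enlargement, which sits inside $B(x_i,5r)$ and hence has $\mu$-mass of order $\alpha$ by the no-concentration estimate.

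Finally, the budget closes the loop: the mass removed per iteration is at most a constant multiple of $\alpha$ (the constant being $O(8^{m+p})$), so after $K$ stages the cumulative loss is $\le K\cdot C\alpha\le \omega/2$ by the hypothesis $\alpha\le \omega/(2\cdot 8^{m+p}K)$. Thus the residual mass stays $\ge\omega/2$ throughout, and the extraction can indeed be iterated $K$ times. Separation is built in by the discarding rule. The main obstacle is the pigeonhole in the extraction step: one must ensure that the finite subcover to which it is applied has the correct cardinality to force at least one ball of $\mu_i$-mass $\ge\alpha$, and it is precisely here that the factor $2\cdot 8^{m+p}$ in the hypotheses of the proposition enters the bookkeeping in a nontrivial way.
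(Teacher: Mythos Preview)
There is a genuine gap, and it sits exactly where you flag the ``main obstacle.'' Your extraction step asks for a center $x_i$ with $\mu_i(B(x_i,2r))\ge\alpha$, but the hypothesis on $r$ makes this impossible. Indeed, your own first step shows $\mu(B(x,4r))\le\alpha/2$ for every $x$, and hence
\[
\mu_i\bigl(B(x,2r)\bigr)\le\mu\bigl(B(x,2r)\bigr)\le\mu\bigl(B(x,4r)\bigr)\le\frac{\alpha}{2}<\alpha
\]
for every $x$. No ball of radius $O(r)$ can carry mass $\alpha$: the smallness of $r$ is calibrated precisely to make all $r$-scale balls light, which is what will eventually produce the $3r$-separation, but it simultaneously destroys any greedy extraction at scale $r$. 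So the sets $A_i$ cannot be balls of fixed small radius; they must be allowed to be large, and the real difficulty becomes controlling the mass one throws away when removing a $3r$-collar around a set of a priori unbounded diameter. Your pigeonhole-on-a-finite-cover idea does not address this, and the factor $2\cdot 8^{m+p}$ in the hypotheses is \emph{not} the cardinality of such a cover.

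The paper itself gives no proof here; it quotes Corollary~2.3 of \cite{CM} verbatim after checking that (H1) and (H2) hold. The argument in \cite{CM} runs the greedy iteration at a \emph{variable} dyadic scale: at each stage one finds the smallest integer $j\ge 1$ for which some ball $B(x,4^{j}r)$ has residual mass at least $\alpha$. Minimality of $j$ means every ball of radius $4^{j-1}r$ is light ($<\alpha$), and (H1) then bounds $\mu_i(B(x,4^{j}r))$ from above by $8^{m+p}\alpha$. One takes $A_i$ to be (essentially) this ball and discards the next dyadic ball, whose residual mass is again controlled via (H1) by a fixed multiple of $\alpha$. The budget inequality $\alpha\le\omega/(2\cdot 8^{m+p}K)$ is exactly what guarantees that $K$ such removals do not exhaust the total mass. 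In short: right skeleton (greedy extraction, collar removal, mass budget), wrong scale for the extracted pieces.
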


\begin{proof}[Proof of Theorem \ref{concentration}]
Our ultimate goal is to construct $k+1$ disjointly supported test functions on $M$ whose Rayleigh quotients are controlled in terms of $L$ and $k$.  First, let $K=2k+1$ and let $A_1, \ldots, A_K$  be $K$  disjoint measurable subsets in $\R^{m+p}$ satisfying Proposition \ref{resultCM} 
with  $\alpha = \frac{\omega}{6\cdot 8^{m+p}  k} = \frac{\text{Vol}(M)}{6\cdot 8^{m+p} k}$.  Denote by $A_i^r=\{x \in \R^{m+p} \; ;\; d(x,A_i)<r\}$ 
the $r$-neighborhood of $A_i$.  
A priori, we have no control over the volume of the portion of $M$ contained in $A_i^r$, so we will make a choice of $k+1$ sets amongst our disjoint $2k+1$ measurable subsets.  Namely, since $d(A_i,A_j) \geq 3r$ for $i \neq j$, the $A_i^r$ are mutually disjoint and it is clear that the number
\[
Q = \# \left\{i \in 1, \ldots, 2k+1 \; ;\; \mu(A_i^r) \geq \frac{\text{Vol}(M)}{k}\right\} 
\]       
is less than $ k$. Therefore, there exist at least $k+1$ subsets, say $A_1,\ldots, A_{k+1}$, amongst $A_1, \ldots, A_K$  with the property that $\mu(A_i^r) < \frac{\text{Vol}(M)}{k}$.  From the definition of $\mu$, the  $k+1$ disjoint measurable sets $A_1^r \cap M,\ldots, A_{k+1}^r \cap M$ also satisfy this estimate.  By abuse of notation, we will refer to these sets on $M$ as $A_i^r$ as well.

As in \cite{CM}, we construct a family of test functions $\varphi_1,\ldots\varphi_{k+1}$ on $M$ as follows: for $i\le k+1$,  $\varphi_i$ is equal to $1$ on $A_i$, vanishes outside the $r$-neighborhood $A_i^r$ of $A_i$ and $\varphi_i(x)=1-\frac{d(x,A_i)}r$ on $A_i^r\setminus A_i$.  Observing that $|\nabla \varphi_i(x)|\le \frac 1 r$ almost everywhere in $A_i^r\setminus A_i$, a straightforward calculation shows that the Rayleigh quotient of $\varphi_i$ is given by 
\[
R(\varphi_i) \le \frac{1}{r^2}\frac{\mu(A_i^r)}{\mu(A_i)} < \frac{1}{r^2} \frac{\frac{\text{Vol}(M)}{k}}{\alpha} = \frac{6 }{r^2}8^{m+p}.
\]
Recall that the number $r$ must be chosen such that 
\[
2\cdot 8^{m+p} \mu(B(x,r)) \leq \frac{\text{Vol}(M)}{6k8^{m+p}}.  
\]
But, since  $M\in\mathcal M(m,p,L)$,
\[
 \mu(B(x,r))\le   L r^m 
\]
and we can take 
\[
r = \left(\frac{\text{Vol}(M)}{L k}\frac{1}{12\cdot 8^{2(m+p)}}\right)^{1/m}.
\] 
Therefore, the estimate of the Rayleigh quotient above 
becomes
\[
R(\varphi_i) \leq \left(\frac{k}{\text{Vol}(M)}\right)^{2/m} L^{2/m}C(m,p)
\]
with $C(m,p)= 6\cdot 8^{m+p}(12\cdot 8^{2(m+p)})^{2/m}$.

Invoking the min-max principle, one deduces the estimate  
$$\lambda_k(M)\le \max_{i\le k+1}  R(\varphi_i) \leq\left(\frac{k}{\text{Vol}(M)}\right)^{2/m} L^{2/m}C(m,p).$$
Since $\lambda_k(M)$ is an intrinsic invariant, the Nash embedding theorem \cite{Nash} says that one can assume without loss of generality that the codimension $p$ is uniformly bounded above in terms of $m$. Hence the constant $C(m,p)$, which is increasing in $p$, can be replaced  by a constant $ C(m)$ which depends only on the dimension $m$.  
In particular, we may take \cite{Nash} $p=2m^2+5m$ to get
\begin{equation}\label{eqn:bigC}
C(m) = 6 \cdot 12^{2/m} \cdot 8^{2m^2 + 14m + 24}.
\end{equation}
This concludes the proof of Theorem \ref{concentration}.
\end{proof}

\begin{proof}[Proof of Theorem \ref{mainth}]
Thanks to Proposition \ref{fundprop}, Theorem \ref{mainth} can be immediately derived as a  consequence of  Theorem \ref{concentration} with $L = \frac{i(M) \text{Vol}(\mathbb{S}^m)}{2}$.
This gives
\begin{equation}\label{eqn:littlec}
c(m) = C(m) \cdot \left( \frac{\text{Vol}(\mathbb{S}^m)}{2}\right)^{2/m}.
\end{equation}
\end{proof}

Combining Theorem \ref{mainth} with a result of Milnor \cite{M} gives
\begin{cor}\label{coro1} Let $P_1,\ldots, P_p$ be $p$ real polynomials in $m+p$ variables of degrees $N_1, \ldots, N_p$, respectively, such that  $M=P_1^{-1}(0)\cap\cdots\cap P_p^{-1}(0)\subset \R^{m+p}$ is a compact $m$-dimensional submanifold. Then, for all $k\ge 1$, 
\[
\kthfunchigh \leq   c(m) N_1^{2/m}\cdots N_p^{2/m} k^{2/m} .
\]
\end{cor}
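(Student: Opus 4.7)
The plan is to deduce this corollary from Theorem~\ref{mainth} by controlling the intersection index $i(M)$ in terms of the product $N_1 \cdots N_p$. Once the bound $i(M) \le N_1 \cdots N_p$ is established, substitution into Theorem~\ref{mainth} immediately gives
\[
\kthfunchigh \le c(m)\, (N_1 \cdots N_p)^{2/m} k^{2/m} = c(m)\, N_1^{2/m} \cdots N_p^{2/m}\, k^{2/m},
\]
which is exactly the claimed inequality. So the entire content of the proof lies in the algebraic bound on $i(M)$.

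To bound $i(M)$, I would fix an arbitrary $p$-plane $\Pi \subset \R^{m+p}$ transverse to $M$, and choose an affine parametrization $\varphi : \R^p \to \Pi$. Setting $Q_i = P_i \circ \varphi$ for $i = 1, \ldots, p$ produces a square system of $p$ real polynomials in $p$ variables with $\deg Q_i \le N_i$. Under the identification $\varphi$, the set $M \cap \Pi$ corresponds to the real solution set of $Q_1 = \cdots = Q_p = 0$, and the transversality assumption guarantees that every such solution is a non-degenerate (isolated) common zero of the system.

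The key remaining step is to bound the number of non-degenerate real zeros of such a system by the Bezout number $N_1 \cdots N_p$. This is where Milnor's estimate for real algebraic sets~\cite{M} enters; alternatively one may complexify the $Q_i$ and invoke the classical Bezout theorem in $\mathbb{C}P^p$, noting that every real transverse intersection point persists as an isolated complex zero of the homogenized system, so the real count is dominated by the complex Bezout count $N_1 \cdots N_p$. Taking the supremum over all transverse $\Pi$ yields $i(M) \le N_1 \cdots N_p$, and the corollary follows.

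The main obstacle is not conceptual but concerns the \emph{sharpness} of the bound on $i(M)$: a crude application of Milnor's Betti-number estimate for a 0-dimensional real variety defined by polynomials of degrees at most $N = \max_i N_i$ in $p$ variables would only yield something like $N(2N-1)^{p-1}$, far weaker than the product $N_1 \cdots N_p$. What must be exploited is the \emph{complete intersection} structure of the system $Q_1 = \cdots = Q_p = 0$ (square system, each equation transverse to the others along $M \cap \Pi$), which is precisely the setting in which the genuine Bezout product appears. With this care, the bound on $i(M)$ is optimal in the leading Bezout constant and the corollary is proven.
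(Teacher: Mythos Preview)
Your proof is correct and follows essentially the same approach as the paper: both bound $i(M) \le N_1 \cdots N_p$ via Milnor's Bezout-type estimate and then invoke Theorem~\ref{mainth}. The only cosmetic difference is that the paper writes the $p$-plane $\Pi$ as the common zero set of $m$ additional linear polynomials $P_{p+1},\ldots,P_{p+m}$ and applies Milnor's Lemma~1 directly to the resulting square system of $m+p$ equations in $\R^{m+p}$, rather than first restricting to $\Pi\cong\R^p$ as you do.
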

\begin{proof}[Proof of Corollary \ref{coro1}]
 A $p$-plane $\Pi$ in $\R^{m+p}$ is defined as the common zero set of $m$ linearly independent polynomials $P_{p+1}, \ldots, P_{p+m}$, each of degree 1. The zero-dimensional variety $\Pi\cap M$ is then given by the $m +p$ polynomial equations $P_1=0,\cdots, P_{p+m}=0$. According to \cite[Lemma 1]{M}, the
number of points 
in $\Pi\cap M$ is at most equal to the product $(\deg P_1)\cdots (\deg P_{p+m}) = N_1 N_2\cdots N_p$, which implies 
$$i(M)\le N_1 N_2\cdots N_p.$$ 
Applying Theorem \ref{mainth}, we get the result.
\end{proof}

\section{Submanifolds of unit volume and large $\lambda_1$}

The proof of Theorem \ref{large} relies on the following $C^1$ isometric embedding result due to Kuiper \cite{Ku}: If a compact $m$-dimensional smooth manifold $M$ admits a $C^1$ embedding as a submanifold of $\R^{m+p}$, $p\geq 1$, then, given any Riemannian metric $g$ on $M$, there exists a $C^1$ isometric embedding from $(M,g)$ into $\R^{m+p}$. 
\begin{proof}[Proof of  Theorem \ref{large}]
Let $M$ be a compact smooth submanifold of dimension $m\ge 3$ of $\R^{m+p}$, $p\geq 1$, and let $K$ be any positive number. According to the result by Dodziuk and the first author \cite{CD}, there exists a Riemannian metric $g$ on $M$ with $$\lambda_1(g)\text{Vol}(g)^{2/m} \ge 2K.$$ Applying  Kuiper's result cited above, there exists a $C^1$ isometric embedding $Y$ from $(M,g)$ into $\R^{m+p}$. According to standard density theorems (e.g., \cite[p. 50]{Hirsch}), the map $Y$ can be approximated, with arbitrary accuracy with respect to the $C^1$-topology, by a smooth embedding $X$. The smooth  metric $g_1$ induced by $X$ is then quasi-isometric to $g$ with a quasi-isometry ratio arbitrarily close to 1. Consequently, there exists a smooth embedding $X$ satisfying
$$\lambda_1(X(M)) \text{Vol}(X(M))^{2/m} =\lambda_1(g_1)\text{Vol}(g_1)^{2/m} \ge K.$$  
This proves assertion (2).
 
 To prove assertion (1) we use a similar argument.  The result of Colbois-Dodziuk is not valid in dimension 2, so we instead apply a result due to Buser et al. \cite{BBD}: they use an arithmetic construction involving subgroups of the modular group to obtain noncompact hyperbolic surfaces.  After a suitable compactification, these are compact orientable hyperbolic surfaces with large genus and with large first eigenvalue (see also \cite[Thm. C]{CE}).  
 \end{proof} 


\bibliographystyle{plain}
\bibliography{abcde}

\end{document}